\documentclass[12pt, a4paper]{article}
\usepackage{amsmath, amssymb, amsthm}
\usepackage{graphics}
\usepackage{graphicx}
\usepackage{epsfig}
\usepackage{dblfloatfix}
\usepackage{float}
\usepackage{placeins}
\usepackage{flafter}
\usepackage[usenames]{color}

\usepackage{epstopdf}
\usepackage{url}
\usepackage{hyperref}
\date{}
\usepackage{fullpage}
\usepackage{setspace}


\newtheorem{ex}{Example}[section]
\newtheorem{df}{Definition}[section]
\newtheorem{pro}{Proposition}[section]
\newtheorem{thm}{Theorem}[section]
\newtheorem{cor}{Corollary}[section]

\makeatletter

\renewcommand\section{\@startsection {section}{1}{\z@}
{-30pt \@plus -1ex \@minus -.2ex} {2.3ex \@plus.2ex}
{\normalfont\normalsize\bfseries}}

\renewcommand\subsection{\@startsection{subsection}{2}{\z@}
{-3.25ex\@plus -1ex \@minus -.2ex} {1.5ex \@plus .2ex}
{\normalfont\normalsize\bfseries}}

\renewcommand{\@seccntformat}[1]{\csname the#1\endcsname. }

\makeatother
\title{\bf ON ALMOST ARMENDARIZ RINGS}
\author{  Sushma Singh \\
Department of Mathematics, \\
Indian Institute of Technology Patna, Bihar, India 801103 \\
E-mail: sushmasingh@iitp.ac.in \\\\
 Om Prakash \\
Department of Mathematics, \\
Indian Institute of Technology Patna, Bihar, India 801103 \\
E-mail: om@iitp.ac.in}


\begin{document}

\maketitle

\begin{abstract} In this paper, we introduce the notion of an almost Armendariz ring which is a generalization of an Armendariz ring and discuss some of its properties. In 2006, Liu and Zhao introduced the concept of weak Armendariz ring. It has been observed that, every almost Armendariz ring is weak Armendariz but converse need not be true. We prove that a ring $R$ is an almost Armendariz if and only if $R[x]$ is an almost Armendariz. It is also shown that if $R/I$ is an almost Armendariz ring and $I$ is semicommutative ideal, then $R$ is an almost Armendariz ring. Moreover, the class of minimal noncommutative almost Armendariz rings is completely determined, up to isomorphism (minimal means having smallest cardinality).
\end{abstract}

\noindent {\it Mathematical subject classification} : Primary 16D25, 16N40; Secondary (optional) 16N80.\\
\noindent {\it \textbf{Key Words}} : Armendariz ring, weak Armendariz ring, semicommutative ring, Lower nilradical, Almost Armendariz ring.

\section{INTRODUCTION}
Throughout this paper, $R$ denotes an associative ring with identity $1\neq 0$ and $R[x]$ is the usual polynomial ring over $R$ in indeterminate $x$. For a polynomial $f(x) \in R[x]$, $C_{f(x)}$ denotes the set of all coefficients of $f(x)$. $M_{n}(R)$ and $U_{n}(R)$ denote the $n\times n$ full matrix ring and upper triangular matrix ring over $R$ respectively. $D_{n}(R)$ is the ring of $n\times n$ upper triangular matrices over $R$ whose diagonal entries are equal. We use $e_{ij}$ for the matrix with $(i, j)^{th}$ entry $1$ and $0$ otherwise.\\
For a ring $R$, $N(R)$ denotes the set of all nilpotent elements of $R$.
We also know that an element $a$ of a ring $R$ is strongly nilpotent if every sequence $a_{1}, a_{2}, a_{3}\ldots$ such that $a_{1} = a$ and $a_{n+1}\in a_{n}Ra_{n}$ (for all $n \in \mathbb{Z_{+}}$) is eventually zero, i.e. there exist a positive integer $n$ such that $a_{n} = 0$. Recall that the lower nil radical (prime radical) of a ring $R$ is defined by the intersection of all prime ideals of $R$ and it is denoted by $N_{*}(R)$. It is precisely the collection of all strongly nilpotent elements of $R$, i.e., $N_{*}(R) = \{x\in R : RxR$ is nilpotent$\}$. $N^{*}(R)$ denotes the upper nil radical (i.e sum of nil ideals of $R$). It is known that $N_{*}(R) \subseteq N^{*}(R) \subseteq N(R)$.
Due to Birkenmeier et al.\cite{G}, a ring $R$ (without identity) is said to be 2-primal if $N_{*}(R) = N(R)$ and by Mark \cite{GG} , $R$ is said to be NI ring if $N^{*}(R) = N(R)$.\\
A ring $R$ is said to be reduced if it has no nonzero nilpotent elements. In 1974, Armendariz  had pointed out that a reduced ring is satisfying the condition of Lemma 1 of \cite{E}. In 1997, term Armendariz ring was coined by Rege and Chhawchharia in \cite{M}.

\begin{df}
A ring $R$ is said to be Armendariz if for two polynomials $f(x)$ and $g(x)\in R[x]$, $f(x)g(x) = 0$ implies $ab = 0$ for each $a \in C_{f(x)}$ and $b \in C_{g(x)}$.
\end{df}

For more details the reader is referred to [\cite{D, R, E, J, N,TT, Z}]. In 2006, Liu and Zhao  generalized the concept of Armendariz ring and introduced the weak Armendariz ring in \cite{Z}.

\begin{df}{[Liu and Zhao \cite{Z}]}
A ring $R$ is said to be weak Armendariz if for two polynomials $f(x)$ and $g(x)\in R[x]$ such that $f(x)g(x) = 0$ implies $ab \in N(R)$ for each $a \in C_{f(x)}$ and $b \in C_{g(x)}$.
\end{df}
A ring $R$ is said to be semicommutative if $ab = 0$ implies $aRb = 0,$ for each $a, b \in R$. Semicommutative ring was introduced by Shin [\cite{GGG}, Lemma 1.2].
Liu and Zhao proved that if $R$ is semicommutative ring, then polynomial ring $R[x]$ over weak Armendariz ring is weak Armendariz ring. In 2008, the concept of nil Armendariz ring, which is the generalization of weak Armendariz ring, introduced by Antoine \cite{R}. A ring $R$ is said to be nil-Armendariz ring if whenever two polynomials $f(x) = \sum_{i=0}^{m}a_{i}x^{i}$ and $g(x) = \sum_{j=0}^{n}b_{j}x^{j}\in R[x]$ such that $f(x)g(x) \in N(R)[x]$ implies $a_{i}b_{j}\in N(R)$, for each $i$, $j$, where $0 \leq i \leq m$ and $0\leq j \leq n$. He proved that a ring $R$ is nil-Armendariz if and only if $R/I$ is nil-Armendariz ring, where $I$ is a nil ideal of $R$.\\

In 2013, Han et al. \cite{J} introduced the concept of Armendariz-over-prime-radical ($APR$) and defined that \emph{a ring $R$ is said to be an APR if $f(x)g(x) \in N_{*}(R)[x]$ implies $ab \in N_{*}(R)$ for each $a \in C_{f(x)}$ and $b \in C_{g(x)}$}. Clearly, $APR$ rings are nil-Armendariz ring but converse is not true.

It is known that $(0)\subseteq N_{*}(R) \subseteq N(R)$. Therefore, motivated by above we introduce the notion of an almost Armendariz ring involving $N_{*}(R)$. Some of results on lower nil radical can be viewed in [\cite{KK, T}].

\section{ALMOST ARMENDARIZ RINGS}

\begin{df}
A ring $R$ is said to be an almost Armendariz ring if for two polynomials $f(x)$ and $g(x)\in R[x]$ such that $f(x)g(x) = 0$ implies $ab \in N_{*}(R)$ for each $a \in C_{f(x)}$ and $b \in C_{g(x)}$.
\end{df}
Clearly, a subring of an almost Armendariz ring is an almost Armendariz. Every Armendariz ring is an almost Armendariz ring and every almost Armendariz ring is weak Armendariz, but  converse need not be true. If $R$  is commutative, then every weak Armendariz ring is an almost Armendariz. Also, if $R$ is a nil ring, then both are equivalent.\\

\begin{ex}
Let $R = \mathbb{Z}_{5}[x, y]/<x^{3}, x^{2}y^{2}, y^{3}>$, where $\mathbb{Z}_{5}$ is Galois field of order 5, $\mathbb{Z}_{5}[x, y]$ be polynomial ring with commuting indeterminate $x, y$ and $<x^{3}, x^{2}y^{2}, y^{3}>$ be the ideal of $\mathbb{Z}_{5}[x, y]$ generated by $x^{3}, x^{2}y^{2}, y^{3}$. Here, $R$ is not an Armendariz ring because if we take two polynomial $f(t) = (x+yt)$, $g(t) = (3x^{2}+2xyt+3y^{2}t^{2})$, then $f(t)g(t) = 0$ but $x.2xy \neq 0$. We can easily see that $R$ is an almost Armendariz ring.
\end{ex}

\begin{ex} Let $R$ be a reduced ring. Then the ring $D_{n}(R)$ is not an Armendariz by Kim and Lee [\cite{N}, Example 3] when $n \geq4$. But $D_{n}(R)$ is an almost Armendariz ring by Proposition (3.3).
\end{ex}

\begin{ex}
We refer the construction of ring in [\cite{S}, Example 1.2]. Let $S$ be a reduced ring, $n$ a positive integer and $R_{n} = U_{2^{n}}(S)$. Then each $R_{n}$ is NI ring by Proposition 4.1(1) of \cite{S}. Define a map $\sigma : R_{n}\rightarrow R_{n+1}$ by $A\mapsto \left(
                                                                                                            \begin{array}{cc}
                                                                                                              A & 0 \\
                                                                                                              0 & A \\
                                                                                                            \end{array}
                                                                                                          \right).$
Then $R_{n}$ can be considered as subring of $R_{n+1}$ via $\sigma$ $(i.e.,~A = \sigma (A) ~for~ A \in R_{n} )$. Notice that $D = \{R_{n}, \sigma_{nm}\}$, with $\sigma_{nm} = \sigma^{m-n}$ whenever $n \leq m$, is a direct system over $I = \{1, 2, \ldots\}$. Set $R = \underrightarrow {Lim}~ R_{n}$ be the direct limit of $D$. Then $R = \bigcup_{n = 1}^{\infty}R_{n}$, and $R$ is $NI$ by Proposition 1.1 of \cite{S}. Since $NI$ rings are nil Armendariz rings and every nil Armendariz ring is weak Armendariz ring. Therefore, $R$ is weak Armendariz ring. Also by Theorem 2.2(1) of \cite{YYY}, $R$ is a semiprime ring, hence $N_{*}(R) = 0$. Here, $R$ is not $APR$, by Lemma 1.1(7) of \cite{J}, since $N^{*}(R) = \{m = (m_{ij})\in R ~|~ m_{ii} = 0 ~for ~all ~i\} \neq 0$. Since $N_{*}(R) = 0$, therefore $R$ is not almost Armendariz ring. In fact, if we take the polynomials $f(x) = e_{11} + e_{12}x$, $g(x) = e_{22}-e_{12}x$ in $R[x]$, then $f(x)g(x) = 0$ but $e_{11}e_{12} \neq 0$ and $e_{12}(-e_{22}) = -e_{12} \neq 0.$
\end{ex}

\begin{pro} Let $R$ be an almost Armendariz ring. Then
\begin{itemize}
\item [(1)] For $a, b, c \in R$, $ab = 0$ and $c^{n} = 0$ imply that $acb \in N_{*}(R)$, so $acb \in N(R)$.
\item [(2)] For $a, b \in R$, $a^{2} = 0$ and $b^{n} = 0$ imply that $ab \in N(R)$.
\item [(3)]For $a, b \in R$, $a^{2} = 0$ and $b^{n} = 0$ imply that $(a+b) \in N(R)$.
\end{itemize}
\end{pro}
\begin{proof}
\begin{itemize}
\item[(1)] Let $f(x) = a(1-cx)$, $g(x) = (1+cx+c^{2}x^{2}+\ldots+c^{n-1}x^{n-1})b$. Then $f(x)g(x) = 0$. Since $R$ is an almost Armendariz ring, therefore $acb \in N_{*}(R)$ and hence $acb \in N(R)$.

\item[(2)] Let $f(x) = a(1-bx)$, $g(x) = (1+bx+b^{2}x^{2}+\ldots+b^{n-1}x^{n-1})a$. Then $f(x)g(x) = 0$. Since $R$ is an almost Armendariz ring, therefore $aba \in N_{*}(R)$ this implies $abab \in N_{*}(R)$. Since $N_{*}(R)\subseteq N(R)$, therefore $(ab)^{2} \in N(R)$ and hence $ab \in N(R)$.

\item[(3)] \textbf{Case (i)} If $a^{2} = 0$ and $b^{2} = 0$, and we take $f(x) = a(1-bx)$, $g(x) = (1+bx)a$, then $f(x)g(x) = 0$. Since $R$ is an almost Armendariz ring, therefore $aba \in N_{*}(R)$.\newline
Now, $$(a+b)^{2} = a^{2}+ab+ba+b^{2} = ab+ba$$ and $$(a+b)^{3} = (a+b)(ab+ba) = a^{2}b+aba+bab+b^{2}a$$ and $$(a+b)^{4} = (a+b)(aba+bab) = a^{2}ba+abab+baba+b^{2}ab = abab+baba \in N_{*}(R).$$  That is $(a+b)^{4} \in N_{*}(R)$, since $N_{*}(R)\subseteq N(R)$, therefore $(a+b)^{4} \in N(R)$ and hence $(a+b) \in N(R)$.\newline \newline

\textbf{Case (ii)} If $a^{2} = 0$, $b^{3} = 0$, and $f(x) = a(1-bx)$, $g(x) = (1+bx+b^{2}x^{2})a$, then $f(x)g(x) = 0$. Since $R$ is an almost Armendariz ring, therefore $aba, ab^{2}a \in N_{*}(R)$.\\
Now, $$(a+b)^{2} = a^{2}+ab+ba+b^{2} = ab+ba+b^{2},$$ $$(a+b)^{3} = (a+b)(ab+ba+b^{2}) = a^{2}b+aba+ab^{2}+bab+b^{2}a+b^{3}$$ \\ $$= aba+ab^{2}+bab+b^{2}a+b^{3} = aba+ab^{2}+bab+b^{2}a.$$ By similar argument, we get $(a+b)^{6} = ababab+abab^{2}a+ab^{2}aba+ab^{2}ab^{2}+bababa+babab^{2}+bab^{2}ab+b^{2}abab+b^{2}ab^{2}a \in N_{*}(R)$, therefore $(a+b)^{6} \in N(R)$ and hence $(a+b) \in N(R)$. Similarly,\newline \newline

\textbf{Case (iii)} If $a^{2} = 0$, $b^{4} = 0$, we get $(a+b)^{8} \in N_{*}(R)$ and hence $(a+b) \in N(R)$. \\

\textbf{Case (iv)} $a^{2} = 0$, $b^{5} = 0$, we have $(a+b)^{10} \in N_{*}(R)$. Therefore, $(a+b) \in N(R)$. \\

Hence, in continuation, we conclude that if $a^{2} = 0$, $b^{n} = 0$, we get $(a+b)^{2n} \in N_{*}(R)$, since  $N_{*}(R)\subseteq N(R)$, therefore $(a+b)^{2n} \in N(R)$. Thus, $(a+b) \in N(R)$.
\end{itemize}
\end{proof}

\begin{pro} In an almost Armendariz ring $R$, the following are equivalent:
\begin{itemize}
\item[(1)] For $a, b \in R$, $a^{3} = 0$ and $b^{3} = 0$ imply $a+b \in N(R)$.
\item[(2)] For $a, b \in R$, $a^{3} = 0$ and $b^{3} = 0$ imply $ab \in N(R)$.
\end{itemize}
\end{pro}
\begin{proof} In this case we have four possibilities for polynomials $f(x)$ and $g(x)$ in $R[x]$.\\
(i) If $f(x) = a(1-bx)$, $g(x) = (1+bx+b^{2}x^{2})a^{2}$, then $f(x)g(x) = 0$. Since $R$ is an almost Armendariz ring, therefore $aba^{2}, ab^{2}a^{2} \in N_{*}(R)$.\\
(ii) If $f(x) = a^{2}(1-bx)$, $g(x) = (1+bx+b^{2}x^{2})a$, then $f(x)g(x) = 0$ and this implies $a^{2}ba, a^{2}b^{2}a \in N_{*}(R)$.\\
If we interchange $a$ and $b$, then we will get the other two and here $b^{2}ab, b^{2}a^{2}b, bab^{2}, ba^{2}b^{2} \in N_{*}(R)$.\\
\textbf{$(1)\Rightarrow(2)$:} To prove $ab\in N(R)$, first we see about nilpotency of $a+b$. Clearly, all terms of the expansion of
 $(a+b)^{5}$ contained in $N_{*}(R)$, except $ababa$ and $babab$. Since $(a+b)$ is nilpotent, so there exist a positive integer $n$ such that $(a+b)^{n} = 0$. If $n$ is even, then $0 = \alpha +(ab)^{n/2}+ (ba)^{n/2}$, where $\alpha \in N_{*}(R)$ and therefore $(ab)^{n/2}+ (ba)^{n/2} \in N_{*}(R)$. Now multiplying by $ab$ from right, we get $(ab)^{n/2}(ab)+(ba)^{n/2}(ab) \in N_{*}(R)$, but $(ba)^{n/2}(ab) \in N_{*}(R)$. Therefore, $(ab)^{n/2+1} \in N_{*}(R)$ and hence $ab \in N(R)$.\\
If $n$ is odd, then $0 = (a+b)^{n} = (a+b)^{n+1} = \beta +(ab)^{(n+1)/2}+ (ba)^{(n+1)/2}$, where $\beta \in N_{*}(R)$ and hence $(ab)^{(n+1)/2}+ (ba)^{(n+1)/2} \in N_{*}(R)$. Now multiplying by $ab$ from right, we get $(ab)^{(n+3)/2}\in N_{*}(R)$. Thus $ab \in N(R)$.\\

\textbf{$(2)\Rightarrow(1)$:} Since all terms of $(a+b)^{5}$ are in $N_{*}(R)$ except $ababa$ and $babab$. Also $ab$ is nilpotent and so is $ba$. Therefore by increasing power of $(a+b)$ till $ab$ and $ba$ reached to their nilpotency. In this way, we get a positive integer $m$ such that $(a+b)^{m} \in N_{*}(R)$. Thus $(a+b) \in N(R)$.
\end{proof}

A ring $R$ is abelian if every idempotent element is central. In 1998, Anderson and Camillo proved that Armendariz rings are abelian. But, it is seen that an almost Armendariz ring need not be abelian. In this regards we have the following:
\begin{ex} By following Proposition (3.2), $R$ is an almost Armendariz ring if and only if $U_{n}(R)$ is an almost Armendariz ring. But, $e_{11}e_{1n} \neq e_{1n}e_{11}$, where $e_{11}^{2} = e_{11}$ and $e_{11}, e_{1n} \in U_{n}(R)$. Hence, an almost Armendariz ring is not an abelian.
\end{ex}
\begin{pro} If $R$ is an almost Armendariz ring, then $R/N_{*}(R)$ is an abelian.
\end{pro}
\begin{proof} Let $e\in R$ be an idempotent and consider $a= e$, $b = (1-e)$. Then $a$ and $b$ are idempotents in $R$ and $c = er(1-e) \in R$ for each $r \in R$. Also,  $e + N_{*}(R), (1-e) + N_{*}(R)$ are idempotent elements of $R/N_{*}(R)$ and $er(1-e)+N_{*}(R) \in R/N_{*}(R)$. Now, by Propsition [2.5 (1)], $eer(1-e)(1-e) = er-ere \in N_{*}(R)$. Again, if we take $a^{'} = (1-e)$, $b^{'} = e$ and $c^{'} = (1-e)re$ are in $R$, then $(1-e)(1-e)ree = (1-e)re = re-ere \in N_{*}(R)$. Therefore, $(er-ere)-(re-ere) = er-re \in N_{*}(R)$. Thus, $R/N_{*}(R)$ is abelian.
\end{proof}

\begin{pro} Let $R$ be a ring and $e$ an idempotent element of $R$. If $e$ is central in $R$, then the following are equivalent:
\begin{itemize}
\item[(1)] $R$ is an almost Armendariz.
\item [(2)]$eR$ and $(1-e)R$ are almost Armendariz ring.
\end{itemize}
\end{pro}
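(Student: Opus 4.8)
The plan is to exploit the fact that a central idempotent splits $R$ into a direct product of rings. First I would record the elementary algebra: since $e$ is a central idempotent, so is $1-e$, with $e(1-e)=0$ and $e+(1-e)=1$. Consequently the map $\varphi : R \to eR \times (1-e)R$ given by $\varphi(r) = (er,\,(1-e)r)$ is a ring isomorphism, where $eR$ and $(1-e)R$ are regarded as rings with identities $e$ and $1-e$ respectively. This reduces the whole statement to the claim that a direct product $R_1\times R_2$ is almost Armendariz if and only if each factor $R_i$ is almost Armendariz, where $R_1=eR$ and $R_2=(1-e)R$.

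Next I would transport the polynomial data across the decomposition. There is a natural ring isomorphism $(R_1\times R_2)[x]\cong R_1[x]\times R_2[x]$ sending $\sum_k (u_k,v_k)x^k$ to $(\sum_k u_k x^k,\ \sum_k v_k x^k)$, and under this identification a product of polynomials vanishes if and only if both coordinate products vanish. Thus, writing $a_i=(u_i,u_i')$ and $b_j=(v_j,v_j')$, the condition $f(x)g(x)=0$ is equivalent to the simultaneous vanishing of $(\sum_i u_i x^i)(\sum_j v_j x^j)$ in $R_1[x]$ and of $(\sum_i u_i' x^i)(\sum_j v_j' x^j)$ in $R_2[x]$.

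The crucial step is the identity $P(R_1\times R_2)=P(R_1)\times P(R_2)$, which I would establish using the characterization of the prime radical as the set of strongly nilpotent elements. The key observation is that $(s,t)(R_1\times R_2)(s,t)=sR_1s\times tR_2t$, so every sequence witnessing strong nilpotence of $(s,t)$ decomposes coordinatewise, and such a sequence terminates in the product exactly when each coordinate sequence terminates. Running this in both directions shows $(s,t)\in P(R_1\times R_2)$ if and only if $s\in P(R_1)$ and $t\in P(R_2)$. I expect this verification to be the main obstacle; once the strongly nilpotent sequences are seen to split into their two coordinates, the equality of the radicals is immediate.

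Finally I would assemble the pieces: by the previous paragraph $a_ib_j=(u_iv_j,\,u_i'v_j')\in P(R_1\times R_2)$ if and only if $u_iv_j\in P(R_1)$ and $u_i'v_j'\in P(R_2)$. Hence the almost Armendariz condition for $R_1\times R_2$ holds for every pair $f,g$ precisely when it holds in each factor, which yields the equivalence $(\mathrm{i})\Leftrightarrow(\mathrm{ii})$ after pulling back along $\varphi$.
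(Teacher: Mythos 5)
Your proof is correct, but it takes a genuinely different route from the paper's. The paper never passes to a direct product: for (i)$\Rightarrow$(ii) it simply invokes the earlier observation that subrings of almost Armendariz rings are almost Armendariz, and for (ii)$\Rightarrow$(i) it works inside $R$ itself, setting $f_{1}=ef$, $f_{2}=(1-e)f$, $g_{1}=eg$, $g_{2}=(1-e)g$, deducing from $f_{1}g_{1}=0$ and $f_{2}g_{2}=0$ that $ea_{i}b_{j}$ and $(1-e)a_{i}b_{j}$ are strongly nilpotent, and then splicing the two terminating sequences for $a_{i}b_{j}=ea_{i}b_{j}+(1-e)a_{i}b_{j}$ via a $\max\{m_{ij},n_{ij}\}$ index argument. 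You instead factor everything through the Peirce isomorphism $R\cong eR\times(1-e)R$ and the key lemma $P(R_{1}\times R_{2})=P(R_{1})\times P(R_{2})$, which the paper nowhere states. Your packaging buys several things: the radical lemma is reusable (it also follows from the fact that primes of a product are $P_{1}\times R_{2}$ or $R_{1}\times P_{2}$), the argument extends verbatim to any finite orthogonal set of central idempotents summing to $1$, and the (i)$\Rightarrow$(ii) direction is obtained cleanly by padding with zero polynomials rather than by appealing to the subring fact for $eR$, whose identity $e$ differs from that of $R$ --- a point the paper glosses over. What the paper's version buys is brevity: it avoids proving the radical-of-a-product identity and stays entirely element-wise, though its sequence manipulation is noticeably more informal than your Step 3. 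Two small points you should make explicit there: a sequence $(s_{n},t_{n})$ in the product is eventually zero iff both coordinate sequences are, because once $s_{n}=0$ every later term lies in $s_{n}R_{1}s_{n}=0$, so the coordinates reach zero at a common index; and to show that strong nilpotence of $(s,t)$ forces strong nilpotence of $s$ alone, pair an arbitrary sequence for $s$ with the sequence $t,0,0,\ldots$, which is legitimate since $0\in tR_{2}t$.
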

\begin{proof}${(1)}\Rightarrow {(2).}$ It is obvious, since $N_{*}(eR) = eN_{*}(R)$ and $N_{*}((1-e)R) = (1-e)N_{*}(R)$.

${(2)}\Rightarrow {(1).}$ Let $f(x) = \sum_{i=0}^{m}a_{i}x^{i}$ and $g(x) = \sum_{j=0}^{n}b_{j}x^{j} \in R[x]$ be such that $f(x)g(x) = 0$. Then $(ef)(eg) = 0$ and $(1-e)f(1-e)g = 0$. Since $eR$ is an almost Armendariz ring, therefore $ea_{i}b_{j} \in N_{*}(R)$. Similarly, $(1-e)a_{i}b_{j} \in N_{*}(R)$, since $(1-e)R$ is also an almost Armendariz ring. Therefore $a_{i}b_{j} \in N_{*}(R)$ for each $i, j$, where $0 \leq i \leq m$ and $0 \leq j \leq n$. Thus $R$ is an almost Armendariz ring.
\end{proof}
A ring $R$ is said to be weakly semicommutative if for any $a, b \in R$, $ab = 0$, then $arb \in N(R)$ for each $r \in R$.\\
Here, we observe that an almost Armendariz ring need not be weakly semicommutative ring and vice-versa. \\
In Example $(2.3)$, it is shown that $R$ is an NI ring. Let $ab = 0$. Then $ba \in N(R)$ and hence $baR \subseteq N(R)$. This implies $aRb \subseteq N(R)$. Therefore, $R$ is weakly semicommutative ring.

\begin{ex} Let $K$ be a field and $R = K[a, b]/<a^{2}>$. By Example (4.8) of \cite{R}, $R$ is an Armendariz ring. Therefore, $R$ is an almost Armendariz ring. But it is not a weakly semicommutative, because, $(ba)a = 0$ but $(ba)b(a)$ is not a nilpotent element of $R$.
\end{ex}
A ring $R$ is locally finite if every finite subset in it generates a finite semigroup mutiplicatively.
\begin{pro} Let $R$ be locally finite abelian ring. If $R$ is an almost Armendariz ring, then $R$ is a weakly semicommutative ring.
\end{pro}
\begin{proof} Let $ab = 0$. For any $r \in R$, since $R$ is locally finite, therefore there exist positive integers $m, n$ such that $r^{m} = r^{m+n}$. Then inductively, we have $r^{m} = r^{m}r^{n} = r^{m}r^{2n} = \ldots = r^{m}r^{mn} = r^{m(n+1)}$. Put $t = n+1$, then $r^{m} = (r^{m})^{t}$. Observe that, $r^{(t-1)m} = r^{(t-2)m}r^{m} = r^{(t-2)m}(r^{m})^{t} = r^{2(t-1)m} = (r^{(t-1)m})^{2}$, hence $r^{(t-1)m}$ is an idempotent. Since $R$ is an abelian, therefore, $ar^{(t-1)m}b = 0$. Now, by Proposition $(2.1)$, $arb \in N(R)$. Thus, $R$ is a weakly semicommutative ring.
\end{proof}


\begin{pro} Every 2-primal ring is an almost Armendariz ring.
\end{pro}
\begin{proof} Let $f(x) = \sum_{i=0}^{m}a_{i}x^{i}$ and $g(x) = \sum_{j=0}^{n}b_{j}x^{j}\in R[x]$ such that $f(x)g(x) = 0$. Then \\

$\begin{array}{ll}
a_{0}b_{0} = 0  ~~~~~~  \hfill (1)\\
a_{0}b_{1}+a_{1}b_{0} = 0  ~~~~~~  \hfill (2)\\
a_{0}b_{2}+a_{1}b_{1}+a_{2}b_{0} = 0  ~~~~~~  \hfill (3)\\
\ldots ~~~\ldots~~~ \ldots  \\
a_{0}b_{m}+a_{1}b_{m-1}+a_{2}b_{m-2}+\ldots+a_{m}b_{0} = 0 ~~~~~  \hfill (4)\\
a_{0}b_{m+1}+a_{1}b_{m}+a_{2}b_{m-1}+\cdots+a_{m}b_{1} = 0  ~~~~~~  \hfill (5)\\
a_{1}b_{m+1}+a_{2}b_{m}+a_{3}b_{m-1}+\cdots+a_{m}b_{2} = 0  ~~~~~~  \hfill (6)\\
\ldots ~~~~\ldots~~~~\ldots \\
a_{m}b_{n} = 0 ~~~~~~  \hfill (7)\\
\end{array}$\\

Since $R$ is 2-primal and $a_{0}b_{0}\in N_{*}(R)$ by (1), we have $b_{0}a_{0}\in N_{*}(R)$. Now, multiplying by $a_{0}$ from right in (2), we get $a_{0}b_{1}a_{0} \in N_{*}(R)$. This implies $a_{0}b_{1}a_{0}b_{1} \in N_{*}(R)$ and hence, $a_{0}b_{1} \in N_{*}(R)$. Again, multiplying by $a_{1}$ from right in (2), we get $a_{1}b_{0} \in N_{*}(R)$. \\
Now, multiplying (3) by $a_{0}, a_{1}$ and $a_{2}$ respectively from right and using $a_{0}b_{1}, a_{1}b_{0} \in N_{*}(R)$, we get $a_{0}b_{2}, a_{1}b_{1}, a_{2}b_{0} \in N_{*}(R)$. In continuation, multiplying (4) by $a_{0}, a_{1}, a_{2}, \ldots, a_{m}$ and using $a_{i}b_{j} \in N_{*}(R)$, where $1 \leq i+j \leq m-1$, we get $a_{0}b_{m}, a_{1}b_{m-1},\ldots, a_{m}b_{0} \in N_{*}(R)$.  Ultimately, we have $a_{i}b_{j} \in N_{*}(R)$ for each $0\leq i \leq m$, $0\leq j \leq n$. Thus, $R$ is an almost Armendariz ring.
\end{proof}

It has been proved that 2-primal rings are almost Armendariz but converse is not true. In this regard, we have the following example:
\begin{ex} Let $F[X,Y]$ be the free algebra over the field $F$ with noncommuting indeterminates $X, Y$ and $I$ be the ideal $(X^{2})^{2}$ of $F[X,Y]$. Consider the ring $R = F[X, Y]/I$ and $x = X+I$. Then by Example 1 of \cite{Y}, $N(R) = xRx+Rx^{2}R+Fx$ and $N_{*}(R) = Rx^{2}R$. Therefore, $R$ is not 2-primal. Again, if $f(t), g(t) \in R[t]$ such that $f(t)g(t) = 0$, then $ab = 0$ for each $a \in C_{f(t)}$ and $b \in C_{g(t)}$ and hence $R$ is Armendariz ring. Thus, $R$ is an almost Armendariz ring.
\end{ex}

\begin{pro} If $I\subseteq N_{*}(R)$ and $R/I$ is an almost Armendariz, then $R$ is an almost Armendariz.
\end{pro}

\begin{proof} Suppose two polynomials $f(x)$ and $g(x)\in R[x]$ are such that $f(x)g(x) = 0$. Then $\overline{f(x)}\overline{g(x)} = \overline{0}$ in $N_{*}(R/I)[x]$ and this implies that $\overline{a}\overline{b} \in N_{*}(R/I)$, because $R/I$ is an almost Armendariz. We know that $N_{*}(R/I) = N_{*}(R)/I$. Therefore $ab \in N_{*}(R)$ for each $a \in C_{f(x)}$ and $b \in C_{g(x)}$. Thus, $R$ is an almost Armendariz ring.
\end{proof}

\begin{pro} A Semicommutative ring is an almost Armendariz ring.
\end{pro}
\begin{proof}  Let $f(x) = \sum_{i=0}^{m}a_{i}x^{i}$ and $g(x) = \sum_{j=0}^{n}b_{j}x^{j} \in R[x]$ be such that $f(x)g(x) = 0$. Then $\overline{f(x)}\overline{g(x)} = \overline{0} \in R/N_{*}(R)[x]$. Since $R$ is a semicommutative ring, so $R/N_{*}(R)$ is reduced. Hence $(\overline{a_{i}b_{j}}) = \overline{0}\in R/N_{*}(R)$, i.e. $(a_{i}b_{j} + N_{*}(R)) = N_{*}(R),$ for each $i, j$. This implies that $a_{i}b_{j} \in N_{*}(R),$ for each $i, j$ where $0 \leq i \leq m$ and $0 \leq j \leq n$. Hence $R$ is an almost Armendariz ring.
\end{proof}
By above Proposition, Semicommutative rings are almost Armendariz ring but converse is not true. In this regards we have the following example:
\begin{ex}
Let $F$ be a field and $A = F[a, b, c]$ be a free algebra of polynomials with constant term zero in noncommuting indeterminates $a, b, c$ over $F$. Here, $A$ is a ring without identity. Consider an ideal of $F+A=I $ generated by $cc, ac, crc$, for all $r \in A$. Let $R = (F+A)/I$ and denote $a+I$ by $a$. From Example 14 of \cite{C}, $R$ is Armendariz, therefore $R$ is an almost Armendariz ring. But it is not a semicommutative ring because $ac \in I$ but $abc \notin I$.
\end{ex}

\begin{thm} For a ring $R$, let $R/I$ be an almost Armendariz ring for some ideal $I$ of $R$. If $I$ is semicommutative ideal of $R$, then $R$ is an almost Armendariz ring.
\end{thm}
\begin{proof} Let $f(x) = \sum_{i=0}^{m}a_{i}x^{i}$ and $g(x) = \sum_{j=0}^{n}b_{i}x^{j}\in R[x]$ be such that $f(x)g(x) = 0$. This implies $r_{s}f(x)g(x)r_{t} = 0$ for each $r_{s}, r_{t} \in R$ and hence\\
\begin{equation} \label{1}
\sum_{k=0}^{m+n}(\sum_{i+j=k}r_{s}a_{i}b_{j}r_{t})x^{k} = 0
\end{equation}

for each $r_{s}, r_{t} \in R$. Also, $f(x)g(x) = 0$ implies that $\overline{f(x)}\overline{g(x)} = \overline{0} \in (R/I)[x]$. Since $R/I$ is an almost Armendariz ring, so $\overline{a_{i}}\overline{b_{j}} \in N_{*}(R/I)$ for each $i, j$ where $0 \leq i\leq m$ and $0 \leq j \leq n$. This implies there exists a positive integer $n_{ij}$ such that $(r_{s}a_{i}b_{j}r_{t})^{n_{ij}}\in I$ for each $i, j$ and all $r_{s}, r_{t} \in R$. In fact, $n_{ij}$ is maximal among all $n_{i_{s}j_{t}}$ for all $r_{s}$ and $r_{t}$ of $R$.\\
Now, by principle of induction we will prove that $a_{i}b_{j} \in N_{*}(R)$, for each $i$, $j$.\\

If $i+j = 0$, then $a_{0}b_{0} = 0$ implies that $a_{0}b_{0} \in N_{*}(R)$.\\
Let $k$ be a positive integer such that $a_{i}b_{j} \in N_{*}(R)$, where $i+j<k$.\\
Now, we prove $a_{i}b_{j} \in N_{*}(R)$, for $i+j = k$.\\
By hypothesis, $a_{0}b_{k-1} \in N_{*}(R)$ and $N_{*}(R) \subseteq N(R)$. Therefore, there exists a positive integer $u$ such that $(a_{0}b_{k-1})^{u} = 0$, and hence, $(b_{k-1}a_{0})^{u+1} = 0$.\\
For brevity of notation, let $p = n_{0k}$. Then for each pair $r_{s}, r_{t} \in R$,  $(r_{s}a_{0}b_{k}r_{t})^{p} \in I$. Now, we fix $r_1, r_2 \in R$ for further calculation. We have
$$((r_{1}a_{0}b_{k}r_{2})^{p+1}r_{1}a_{1})(b_{k-1}a_{0})^{u+1}(b_{k-1}r_{2}(r_{1}a_{0}b_{k}r_{2})^{p+2}) = 0.$$
Now, $$((r_1a_0b_kr_2)^{p+1}r_1a_1)b_{k-1}, (r_2(r_1a_0b_kr_2)^{p+1}r_1), a_0(b_kr_2(r_1a_0b_kr_2)^{p+1}r_1a_1)\in N(I)$$ and $$(b_{k-1}r_2(r_1a_0b_kr_2)^{p+2}) \in N(I).$$ Therefore, \\
$[((r_{1}a_{0}b_{k}r_{2})^{p+1}r_{1}a_{1})b_{k-1}(r_{2}(r_{1}a_{0}b_{k}r_{2})^{p+1}r_{1})a_{0}(b_{k}r_{2}(r_{1}a_{0}b_{k}r_{2})^{p+1}r_{1}a_{1})(b_{k-1}a_{0})^{u}(b_{k-1}r_{2}(r_{1}a_{0}b_{k}r_{2})^{p+2})] = 0$. This implies that
\begin{eqnarray*}
[((r_{1}a_{0}b_{k}r_{2})^{p+1}(r_{1}a_{1}b_{k-1}r_{2})(r_{1}a_{0}b_{k}r_{2})^{p+2})((r_{1}a_{0}b_{k}r_{2})^{p+1}r_{1}a_{1})(b_{k-1}a_{0})^{u}(b_{k-1}r_{2}(r_{1}a_{0}b_{k}r_{2})^{p+2})] = 0.
\end{eqnarray*}
\\ Continuing this process, we get $$[(r_{1}a_{0}b_{k}r_{2})^{p+1}(r_{1}a_{1}b_{k-1}r_{2})(r_{1}a_{0}b_{k}r_{2})^{p+2}]^{u+2} = 0.$$ This implies $$((r_{1}a_{0}b_{k}r_{2})^{p+1}(r_{1}a_{1}b_{k-1}r_{2})(r_{1}a_{0}b_{k}r_{2})^{p+2})\in N(I).$$ Similarly, we can show that $$((r_{1}a_{0}b_{k}r_{2})^{p+1}(r_{1}a_{i}b_{k-i}r_{2})(r_{1}a_{0}b_{k}r_{2})^{p+2})\in N(I),$$ for $2 \leq i \leq k$.
By equation (\ref{1}), we have
\begin{equation} \label{2}
r_{1}a_{0}b_{k}r_{2}+r_{1}a_{1}b_{k-1}r_{2}+\cdots+r_{1}a_{k}b_{0}r_{2} = 0.
\end{equation}
Multiplying by $(r_{1}a_{0}b_{k}r_{2})^{p+1}$ and $(r_{1}a_{0}b_{k}r_{2})^{p+2}$ from left and right respectively in equation (\ref{2}), we get\\
$$(r_{1}a_{0}b_{k}r_{2})^{p+1}(r_{1}a_{0}b_{k}r_{2})(r_{1}a_{0}b_{k}r_{2})^{p+2} = -[\sum_{i=1}^{k}(r_{1}a_{0}b_{k}r_{2})^{p+1}(r_{1}a_{i}b_{k-i}r_{2})\\(r_{1}a_{0}b_{k}r_{2})^{p+2}] \in N(I).$$ This implies $(r_{1}a_{0}b_{k}r_{2})^{2p+4} \in N(I)$.\\ By same method we can prove that for any pair of $r_{3}, r_{4} \in R$, $(r_{3}a_{0}b_{k}r_{4})^{2p+4} \in N(I)$. Thus, $a_{0}b_{k}\in N_{*}(R)$.\\
Since, $a_{1}b_{k-2} \in N_{*}(R)$ and $N_{*}(R) \subseteq N(R)$. Therefore, there exists a positive integer $v$ such that\\ $(a_{1}b_{k-2})^{v} = 0$, and hence, $(b_{k-2}a_{1})^{v+1} = 0$.\\
Again, let $q = n_{1, k-1}$. Then $(r_{1}a_{1}b_{k-1}r_{2})^{q} \in I$, since $R/I$ is an almost Armendariz ring and hence
$$((r_{1}a_{1}b_{k-1}r_{2})^{q+1}r_{1}a_{2})(b_{k-2}a_{1})^{v+1}(b_{k-2}r_{2}(r_{1}a_{1}b_{k-1}r_{2})^{q+2}) = 0$$

$[((r_{1}a_{1}b_{k-1}r_{2})^{q+1}r_{1}a_{2})b_{k-2}(r_{2}(r_{1}a_{1}b_{k-1}r_{2})^{q+1}r_{1})a_{1}(b_{k-1}r_{2}(r_{1}a_{1}b_{k-1}r_{2})^{q+1}r_{1}a_{2})(b_{k-2}a_{1})^{v}\\(b_{k-2}r_{2}(r_{1}a_{1}b_{k-1}r_{2})^{q+2})] = 0$.\\
Continuing this process, we get $$[(r_{1}a_{1}b_{k-1}r_{2})^{q+1}(r_{1}a_{2}b_{k-2}r_{2})(r_{1}a_{1}b_{k-1}r_{2})^{q+2}]^{v+2} = 0.$$ Therefore,
$$(r_{1}a_{1}b_{k-1}r_{2})^{q+1}(r_{1}a_{2}b_{k-2}r_{2})(r_{1}a_{1}b_{k-1}r_{2})^{q+2} \in N(I).$$ Similarly, $(r_{1}a_{1}b_{k-1}r_{2})^{q+1}(r_{1}a_{i}b_{k-i}r_{2})(r_{1}a_{1}b_{k-1}r_{2})^{q+2} \in N(I)$, for $3 \leq i \leq k$.\\ Suppose, $(r_{1}a_{0}b_{k}r_{2})^{w} = 0$, then $$[(r_{1}a_{1}b_{k-1}r_{2})^{q+1}(r_{1}a_{0}b_{k}r_{2})(r_{1}a_{1}b_{k-1}r_{2})^{q+2}]^{w} = 0.$$\\
Hence, $$[(r_{1}a_{1}b_{k-1}r_{2})^{q+1}(r_{1}a_{0}b_{k}r_{2})(r_{1}a_{1}b_{k-1}r_{2})^{q+2}] \in N(I).$$\\

 Again, multiplying in equation (\ref{2}) by  $(r_{1}a_{1}b_{k-1}r_{2})^{q+1}$, $(r_{1}a_{1}b_{k-1}r_{2})^{q+2}$ from left and right respectively, we get  \newline \newline $(r_{1}a_{1}b_{k-1}r_{2})^{2q+4} = - \sum_{i=2}^{k}[(r_{1}a_{1}b_{k-1}r_{2})^{q+1}(r_{1}a_{i}b_{k-i}r_{2})(r_{1}a_{1}b_{k-1}r_{2})^{q+2}]-$ \newline $$
[(r_{1}a_{1}b_{k-1}r_{2})^{q+1}(r_{1}a_{0}b_{k}r_{2})(r_{1}a_{1}b_{k-1}r_{2})^{q+2}] \in N(I).$$\\
This implies $(r_{1}a_{1}b_{k-1}r_{2})^{2q+4}\in N(I)$, for $r_{1}, r_{2} \in R$.
Therefore, there exists a positive integer $t_2$ such that $[(r_{1}a_{1}b_{k-1}r_{2})^{2q+4}]^{t_{2}} = 0$, for $r_{1}, r_{2}\in R.$ Similarly we can show that for any pair of $r_{3}, r_{4} \in R$, $(r_{3}a_{1}b_{k-2}r_{4})^{2q+4} \in N(I)$. Hence $a_{1}b_{k-1} \in N_{*}(R)$. Continuing the above process, we can show that $a_{2}b_{k-2}, a_{3}b_{k-3}, \ldots a_{k}b_{0} \in N_{*}(R)$. Thus, $a_{i}b_{j} \in N_{*}(R)$, when $i+j = k$.\\
Hence by induction, $a_{i}b_{j} \in N_{*}(R)$ for each $i, j$ where $0 \leq i\leq m$ and $0 \leq j \leq n$. Therefore, $R$ is an almost Armendariz ring.
\end{proof}
\begin{pro} For a ring $R$, let $R/I$ be an almost Armendariz ring for some ideal $I$ of $R$. If $I$ is nilpotent, then $R$ is an almost Armendariz ring.
\end{pro}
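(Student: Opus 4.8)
The plan is to reduce this statement to the preceding Proposition, namely the one asserting that if $I \subseteq P(R)$ and $R/I$ is almost Armendariz, then $R$ is almost Armendariz. The only new ingredient needed is the elementary fact that a nilpotent ideal is always contained in the prime radical $P(R)$. Once this containment is in hand, the hypothesis that $R/I$ is almost Armendariz lets us quote that earlier proposition essentially word for word, so that no fresh polynomial manipulation is required.

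First I would recall the description of the prime radical recorded in the introduction, namely $P(R) = \{x \in R : RxR \text{ is nilpotent}\}$. Assuming $I$ is nilpotent, say $I^{k} = 0$ for some positive integer $k$, I would fix an arbitrary element $a \in I$. Since $I$ is a two-sided ideal of $R$, we have $RaR \subseteq I$, and hence $(RaR)^{k} \subseteq I^{k} = 0$. Thus $RaR$ is nilpotent, so $a \in P(R)$ by the above characterization. As $a \in I$ was arbitrary, this establishes $I \subseteq P(R)$.

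Having secured $I \subseteq P(R)$, the conclusion is immediate: because $R/I$ is almost Armendariz by hypothesis and $I \subseteq P(R)$, the preceding Proposition applies verbatim and yields that $R$ is almost Armendariz, finishing the argument.

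The main, and essentially only, obstacle is the containment $I \subseteq P(R)$, and even this is routine. The single point to be careful about is to use the two-sided ideal property so that $RaR$ remains inside $I$ at every multiplication; this is exactly what forces $(RaR)^{k}$ into $I^{k} = 0$, and it is the step one might overlook if one tried instead to argue directly with individual nilpotent elements rather than with the ideal $I$ as a whole. After that, nothing further is needed beyond citing the earlier result.
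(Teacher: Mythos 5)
Your proof is correct, but it takes a genuinely different route from the paper. The paper proves this proposition directly at the polynomial level: given $f(x)g(x)=0$, it passes to $R/I$, obtains $\overline{a_i}\,\overline{b_j}\in P(R/I)$, unwinds this to get $(r_1 a_i b_j r_2)^{n_{ij}}\in I$ for all $r_1, r_2 \in R$, and then uses nilpotency of $I$ to conclude $((r_1 a_i b_j r_2)^{n_{ij}})^{m}=0$, so that $a_i b_j \in P(R)$ --- in effect repeating the lifting computation of the preceding proposition with the nilpotency of $I$ substituted for the containment $I\subseteq P(R)$. You instead prove the clean algebraic lemma that any nilpotent ideal lies in the prime radical: for $a\in I$ with $I^k=0$, the ideal property gives $RaR\subseteq I$, hence $(RaR)^k\subseteq I^k=0$, and the characterization $P(R)=\{x\in R: RxR \text{ is nilpotent}\}$ stated in the paper's introduction (of which you use only the sound direction, namely that $RxR$ nilpotent forces $x\in P(R)$, since nilpotent ideals lie in every prime and $x=1\cdot x\cdot 1\in RxR$) yields $a\in P(R)$; then the preceding proposition applies verbatim. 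Your reduction is preferable on two counts: it exhibits this proposition as a strict corollary of the previous one, making the logical structure transparent, and it avoids the paper's somewhat delicate handling of strong nilpotence via powers of products $(r_1 a_i b_j r_2)^{n}$, which conflates ordinary nilpotence of such products with strong nilpotence; by quoting the earlier proposition as a black box you sidestep repeating that computation. What the paper's self-contained argument buys, by contrast, is independence from the earlier proposition, at the cost of duplicating its mechanism.
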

\begin{proof} Let $f(x) = \sum_{i=0}^{m}a_{i}x^{i}, g(x) = \sum_{j=0}^{n}b_{j}x^{j}\in R[x]$ such that $f(x)g(x) = 0$. Then $\overline{f(x)} \overline{g(x)}= \overline{0}$. So $(\overline{a_{i}})(\overline{b_{j}}) \in N_{*}(R/I)$, since $R/I$ is an almost Armendariz. Therefore, $(r_{p}a_{i}b_{j}r_{q})^{n_{ij}} \in I$ for each $i, j$ and for any $r_{p}, r_{q}\in R$. Since $I$ is a nilpotent ideal of $R$, $((r_{p}a_{i}b_{j}r_{q})^{n_{ij}})^{k} = 0$, for some $k$, for each $i, j$, where $0 \leq i\leq m$, $0 \leq j\leq n$ and for any $r_{p}, r_{q}\in R$. So $a_{i}b_{j} \in N_{*}(R)$ for each $i, j$. Thus, $R$ is an almost Armendariz ring.
\end{proof}

\section{STRUCTURAL PROPERTIES OF ALMOST ARMENDARIZ RING}
We know by [\cite{D}, Theorem 2] that a ring $R$ is Armendariz if and only if $R[x]$ is an Armendariz ring. But, for weak Armendariz ring $R,$ $R[x]$ need not be weak Armendariz ring. In 2006, Liu and Zhao [\cite{Z}, Theorem ~3.8] proved that if $R$ is semicommutative, then $R[x]$ is weak. Now, for almost Armendariz, we have the following:
\begin{thm} Let $R$ be a ring. Then $R$ is an almost Armendariz ring if and only if $R[x]$ is an almost Armendariz ring.
\end{thm}
\begin{proof}  Let $p(y) = f_{0}(x) + f_{1}(x)y + \cdots + f_{m}(x)y^{m}$, $q(y) = g_{0} + g_{1}(x)y +\cdots +g_{n}y^{n} \in R[x][y]$ such that $p(y)q(y) = 0$, where $f_{i}(x), g_{j}(x) \in R[x]$. Write $f_{i}(x) = a_{i0} + a_{i1}x + \cdots + a_{iu_{i}}x^{u_{i}}$, $g_{j}(x) = b_{j0} + b_{j1}x + \cdots + b_{jv_{j}}x^{v_{j}}$, for each $0 \leq i \leq m$ and $0 \leq j \leq n$, where $a_{i0}, a_{i1}, \ldots, a_{iu_{i}}, b_{j0}, b_{j1}, \ldots, b_{jv_{j}} \in R$. We have to show $f_{i}(x)g_{j}(x) \in P(R[x])$, for each $0 \leq i \leq m$ and $0 \leq j \leq n$. Choose a positive integer $k$ such that $k > deg(f_{0}(x)) + deg(f_{1}(x)) + \cdots + deg(f_{m}(x)) + deg(g_{0}(x)) + deg (g_{1}(x)) + \cdots + deg(g_{n}(x))$. Since $p(y)q(y) = 0 \in R[x][y]$, we get

$$\left \{
\begin{array}{ll}
f_{0}(x)g_{0}(x) = 0\\
f_{0}(x)g_{1}(x) + f_{1}(x)g_{0}(x) = 0\\
\ldots\ldots\ldots\ldots\; \; \; \; \;\; \; \; \; \;\; \; \; \; \; \; \; \; \; \; \; \; \; \; \; \; \; \; \; \; \; \; \; \; \; \; \; \; \; \; \; \; \; \; \;\; \; \; \; \;\; \; \; \; \; (\ast)\\
\ldots\ldots\ldots\ldots\\
f_{m}(x)g_{n}(x) = 0
\end{array}\right.$$
Now put
$$\left \{
\begin{array}{ll}
p(x^{k}) = f(x) = f_{0}(x) + f_{1}(x)x^{k} + f_{2}x^{2k} + \cdots + f_{m}(x)x^{mk};\\
q(x^{k}) = g(x) = g_{0}(x) + g_{1}(x)x^{k} + g_{2}x^{2k} + \cdots + g_{n}x^{nk}. \;\;\;\;\; \; \; \; \; \; \; \; \; \; \; \; \; \; \; \;(\ast\ast)
\end{array}\right.$$
Then
$$f(x)g(x) = f_{0}(x)g_{0}(x) + (f_{0}(x)g_{1}(x) + f_{1}(x)g_{0}(x))x^{k} + \cdots f_{m}(x)g_{n}(x)x^{(n + k)}.$$
Therefore, by $(\ast\ast)$, we have $f(x)g(x) = 0$ in $R[x]$. On the other hand, we have \\ $f(x)g(x) = (a_{00} + a_{01}x + \cdots +a_{0u_{0}}x^{u_{0}} + a_{10}x^{k} + a_{11}x^{k + 1} + \cdots + a_{1u_{1}}x^{k + u_{1}} + \cdots + a_{m0} + a_{m1}x^{mk + 1} + \cdots + a_{mu_{m}}x^{mk + u_{m}})(b_{00} + b_{01}x + b_{0v_{0}}x^{v_{0}} + b_{10}x^{k} + b_{11}x^{k + 1} + \cdots + b_{1v_{1}}x^{k + v_{1}} + \cdots + b_{n0}x^{nk} + b_{n1}x^{nk + 1} + \cdots + b_{nv_{n}}x^{nk + v_{n}}) = 0$.\\  Since $R$ is an almost Armendariz ring, we have $a_{ic}b_{jd} \in P(R),$ for all $0 \leq i \leq m$, $0 \leq j \leq n$, $c \in \{0, 1, \ldots, u_{i}\}$ and $d \in \{0, 1, \ldots, v_{j}\}.$ Therefore, $f_{i}(x)g_{j}(x) \in P(R)[x] = P(R[x]),$ for all $0 \leq i \leq m$ and $0 \leq j \leq n$. Hence, $R[x]$ is an almost Armendariz ring.
\end{proof}
\begin{thm} A ring $R$ is an almost Armendariz ring if and only if $R[x, x^{-1}]$ is an almost Armendariz ring.
\end{thm}
\begin{proof} Let $p(y) = f_{0} + f_{1}y + \cdots + f_{r}y^{r}$, $q(y) = g_{0} + g_{1}(x)y + \cdots + g_{s}y^{s} \in R[x, x^{-1}][y]$ such that $p(y)q(y) = 0$, where $f_{i}'s, g_{j}'s \in R[x,x^{-1}]$. Consider $$f_{i} = a_{i(-n_{i})}x^{-n_{i}} + a_{i(-n_{i}+1)}x^{-n_{i}+1} + \cdots + a_{i(-1)}x^{-1}+a_{i0} + a_{i1}x+ \cdots +a_{im_{i}}x^{m_{i}},$$ $$g_{j} = b_{j(-n'_{j})}x^{-n'_{j}} + b_{j(-n'_{j}+1)}x^{-n'_{j}+1}+ \cdots + b_{j(-1)}x^{-1} + b_{j0} + b_{j1}x+ \cdots + b_{jm'_{j}}x^{m'_{j}},$$ for all $0\leq i \leq r$ and $0\leq j \leq s$, where $a_{i(-n_{i})}, a_{i(-n_{i}+1)}, \ldots, a_{i(-1)}, a_{i0}, a_{i1}, \ldots, a_{im_{i}}$ and \\ $ b_{j(-n'_{j})}, \ldots b_{j(-1)}, b_{j0}, b_{j1},  \ldots b_{jm'_{j}}$ are in $R$. Choose a positive integer $k$ $>$ maximum of $n_{i}$, for $ 0 \leq i \leq r$ and $k'$ $>$  maximum $n'_{j}$, for $0 \leq j \leq s.$ Then $x^{k}p(y), x^{k'}q(y)\in R[x][y]$. Again, consider a positive integer $t$ such that $t > deg(f_{0}) + deg (f_{1})+ \cdots + deg(f_{r}) + deg(g_{0}) + deg(g_{1}) + \cdots + deg(g_{s}),$ then $x^{k}p(x^{t}) = F(x), x^{k'}q(x^{t}) = G(x)\in R[x]$ and hence $F(x)G(x) = 0$. Since $R$ is an almost Armendariz so $a_{ic}b_{jd}\in P(R)$ for each $i, j$ where $0\leq i\leq r$, $0\leq j \leq s$, $c \in \{(-n_{i})\ldots (-1), 0, \ldots, m_{i}\}$ and $d \in \{(-n'_{j})\ldots (-1), 0, \ldots, m'_{j}\}$. This  implies $f_{i}g_{j} \in P(R)[x, x^{-1}] = P(R[x, x^{-1}])$, for each $0\leq i \leq r$, $0\leq j \leq s$. Thus, $R[x, x^{-1}]$ is an almost Armendariz ring.
\end{proof}
Let $R$ be a ring and let $S^{-1}R = \{u^{-1}a ~|~ u \in S, a \in R\}$ with $S$ a multiplicative closed subset of the ring $R$ consisting of central regular elements. Then $S^{-1}R$ is a ring.
\begin{pro} Let $R$ be a ring and $S^{-1}(R)$ as above. If $R$ is an almost Armendariz ring, then $S^{-1}(R)$ is an almost Armendariz ring.
\end{pro}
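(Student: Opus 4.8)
The plan is to clear denominators and pull the vanishing relation back into $R[x]$, where the almost Armendariz hypothesis applies, and then push the resulting membership in $P(R)$ forward into $P(S^{-1}R)$. Suppose $p(x) = \sum_{i=0}^{m}\alpha_i x^{i}$ and $q(x) = \sum_{j=0}^{n}\beta_j x^{j}$ lie in $(S^{-1}R)[x]$ with $p(x)q(x) = 0$, and write $\alpha_i = u_i^{-1}a_i$ and $\beta_j = v_j^{-1}b_j$ with $u_i, v_j \in S$ and $a_i, b_j \in R$. Since $S$ is multiplicatively closed and central, I would set $u = \prod_i u_i$ and $v = \prod_j v_j$ in $S$ and rewrite, using centrality, $\alpha_i = u^{-1}a_i'$ and $\beta_j = v^{-1}b_j'$ with $a_i', b_j' \in R$. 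Thus $p(x) = u^{-1}f(x)$ and $q(x) = v^{-1}g(x)$, where $f(x) = \sum_i a_i' x^i$ and $g(x) = \sum_j b_j' x^j$ lie in $R[x]$.

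The first key step is to observe that $f(x)g(x) = 0$ already holds in $R[x]$. Because $u$ and $v$ are central, $p(x)q(x) = (uv)^{-1}f(x)g(x)$, and since $uv$ is a unit in $S^{-1}R$ the hypothesis $p(x)q(x) = 0$ forces the image of $f(x)g(x)$ to vanish in $(S^{-1}R)[x] = S^{-1}(R[x])$. As every element of $S$ is a central regular element, it remains regular in $R[x]$, so the localization map $R[x] \to S^{-1}(R[x])$ is injective; hence $f(x)g(x) = 0$ in $R[x]$. Now the almost Armendariz hypothesis on $R$ yields $a_i'b_j' \in P(R)$ for all $i, j$.

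The main step, and the place I expect the real work to be, is transferring membership from $P(R)$ to $P(S^{-1}R)$; concretely, showing that the image of a strongly nilpotent element of $R$ is strongly nilpotent in $S^{-1}R$. Given $c \in P(R)$, I would test strong nilpotence of $c/1$ by taking an arbitrary sequence $\gamma_0 = c/1$, $\gamma_{k+1} = \gamma_k \rho_k \gamma_k$ with $\rho_k = t_k^{-1}r_k \in S^{-1}R$. Using centrality of the denominators to pull all the $t_k^{-1}$ to the front, each $\gamma_k$ takes the form (central unit)$\,\cdot c_k$ with $c_k \in R$, where $c_0 = c$ and $c_{k+1} = c_k r_k c_k \in c_k R c_k$. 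Since $c$ is strongly nilpotent in $R$, the sequence $(c_k)$ terminates at $0$, whence $\gamma_k = 0$ for large $k$; thus $c/1 \in P(S^{-1}R)$. Applying this to $c = a_i'b_j'$ and using that $P(S^{-1}R)$ is an ideal, I conclude $\alpha_i\beta_j = (uv)^{-1}(a_i'b_j'/1) \in P(S^{-1}R)$ for all $i, j$, so $S^{-1}R$ is almost Armendariz. The only points demanding care are the bookkeeping in the common-denominator reduction and the claim that centrality lets one collect all denominators in the strong-nilpotence sequence; both rely essentially on $S$ being central and regular, and the strong-nilpotence transfer is the genuine heart of the argument.
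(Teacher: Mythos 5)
Your proposal is correct and follows essentially the same route as the paper's proof: reduce to common denominators using centrality of $S$, use regularity of the elements of $S$ to conclude $f(x)g(x)=0$ already in $R[x]$, apply the almost Armendariz hypothesis there, and transfer $a_i'b_j'\in P(R)$ to $\alpha_i\beta_j\in P(S^{-1}R)$ via centrality of the denominators. Indeed your write-up is more careful at the two points the paper glosses --- the paper simply assumes single denominators $u,v$ from the start, and it transfers strong nilpotence through a loosely stated fixed-exponent condition $(\gamma\alpha_i\beta_j\delta)^{n_{ij}}=0$, whereas your sequence argument $\gamma_{k+1}=\gamma_k\rho_k\gamma_k$ with all denominators collected in front is the correct verification that $c/1$ is strongly nilpotent.
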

\begin{proof} Let $R$ be an almost Armendariz ring. Let $F(x) = \sum_{i=0}^{m}\alpha_{i}x^{i}$ and $G(x) = \sum_{j=0}^{n}\beta_{j}x^{j} \in S^{-1}R[x]$ be such that $$F(x)G(x) = 0$$ where $\alpha_{i} = u^{-1}a_{i}$, $\beta_j = v^{-1}b_{j}$ with $a_{i}, b_{j} \in R$ and $u, v \in S$. Now $$F(x)G(x) = (uv)^{-1}(a_{0} + a_{1}x + \cdots +a_{m}x^{m})(b_{0} + b_{1}x + \cdots +b_{n}x^{n}) = 0,$$ $f(x) = \sum_{i=0}^{m}a_{i}x^{i}$ and $g(x) = \sum_{j=0}^{n}b_{j}x^{j} \in R[x]$ with $f(x)g(x) = 0$, since $R$ is an almost Armenadariz ring $a_{i}b_{j} \in N_{*}(R)$. We know that if $G = S^{-1}(R)$, then $N_{*}(G) = S^{-1}(N_{*}(R))$, so $\alpha_{i}\beta_{j} = a_{i}u^{-1}b_{j}v^{-1} \in N_{*}(G)$. Hence $S^{-1}(R)$ is an almost Armendariz.
\end{proof}
Next, we prove that $R$ is an almost Armendariz ring if and only if $U_{n}(R)$ is an almost Armendariz ring. Here, it is noted that\\
\begin{equation*}
N_{*}(U_{n}(R)) = \left(
                    \begin{array}{ccc}
                      N_{*}(R) & R & R \\
                      0 & \ddots & R \\
                      0 & 0 & N_{*}(R) \\
                    \end{array}
                  \right)
\end{equation*}
\begin{pro} A ring $R$ is an almost Armendariz if and only if $U_{n}(R)$ is an almost Armendariz.
\end{pro}
\begin{proof}  Let $R$ be an almost Armendariz ring. Let $f(x) = A_{0}+A_{1}x+A_{2}x^{2}+\cdots+A_{r}x^{r}, g(x) = B_{0}+B_{1}x+B_{2}x^{2}+\cdots+B_{s}x^{s} \in U_{n}(R)[x]$ such that $f(x)g(x) = 0$, where $A_{i}'s$ and $B_{j}'s$ are\\
$A_{i} = \left(
           \begin{array}{cccc}
             a_{11}^{i} & a_{12}^{i} & \ldots & a_{1n}^{i} \\
             0 & a_{22}^{i} & \ldots & a_{nn}^{i} \\
             \vdots & \vdots & \ddots & \vdots \\
             0 & 0 & \ldots & a_{nn}^{i} \\
           \end{array}
         \right)$,~~~~~~~~~~~~~~~~
         $B_{j} = \left(
           \begin{array}{cccc}
             b_{11}^{j} & b_{12}^{j} & \ldots & b_{1n}^{j} \\
             0 & b_{22}^{j} & \ldots & b_{nn}^{i} \\
             \vdots & \vdots & \ddots & \vdots \\
             0 & 0 & \ldots & b_{nn}^{j} \\
           \end{array}
         \right)$.\\

Then from $f(x)g(x) = 0$, we have $\Big(\sum _{i=0}^{r}a_{pp}^{i}x^{i}\Big)\Big(\sum_{j=0}^{s}b_{pp}^{j}x^{j}\Big) = 0\in R[x]$, for $p = 1, 2\ldots n$. Since $R$ is an almost Armendariz, $a_{pp}^{i}b_{pp}^{j} \in N_{*}(R)$, for each $p$ and each $i$, $j$. Therefore, $A_{i}B_{j} \in N_{*}(U_{n}(R))$ for each $i, j$. Hence, $U_{n}(R)$ is an almost Armendariz ring.
\end{proof}
\begin{cor} If $R$ is an Armendariz ring, then for a positive integer $n$, $U_{n}(R)$ is an almost Armendariz ring.
\end{cor}

\begin{pro} Let $R$ be a reduced ring. Then $D_{n}(R)$ is an almost Armendariz.
\end{pro}
\begin{proof} Reduced rings are Armendariz rings, therefore by Corollary $(3.1)$, $D_{n}(R)$ is an almost Armendariz.
\end{proof}
\begin{df} Given a ring $R$ and a bimodule $_{R}M_{R}$, the trivial extension of $R$ by $M$ is the ring $T(R, M)$ with the usual addition and multiplication defined as
$$(r_{1}, m_{1})(r_{2}, m_{2}) = (r_{1}r_{2}, r_{1}m_{2}+m_{1}r_{2}).$$
This is isomorphic to the ring of all matrices of the form $\left(
                                                 \begin{array}{cc}
                                                   r & m \\
                                                   0 & r \\
                                                 \end{array}
                                               \right)$ with usual addition and multiplication of  matrices, where $r\in R$ and $m\in M$.
\end{df}
\begin{cor} Let $R$ be a reduced ring. Then trivial extension $T(R, R)$ is an almost Armendariz.
\end{cor}

Towards the property of an almost Armendariz ring for $M_{n}(R)$, we have the following :
\begin{ex} Let $F$ be a field and $R=M_{n}(R)$ . Let $f(x) = e_{11}x -e_{12}x$ and $g(x) = e_{21}+e_{11}x$, then $f(x)g(x) = 0$. But $e_{11}e_{11} = e_{11}$ is not strongly nilpotent. Thus, $R$ is not an almost Armendariz.
\end{ex}

\begin{df} For an algebra $R$ over commutative ring $S$, the Dorroh extension of $R$ by $S$ is an abelian group $D = R\oplus S$ with multiplication given by $(r_1, s_1)(r_2, s_2) = (r_1r_2+s_1r_2+s_2r_1, s_1s_2)$, where $r_1, r_2 \in R$ and $s_1, s_2 \in S$.
\end{df}
With this we have the following :
\begin{thm} Let $R$ be an algebra over a commutative domain $S$ and $D$ the Dorroh extension of $R$ by $S$. Then $R$ is an almost Armendariz ring if and only if $D$ is an almost Armendariz ring.
\end{thm}
\begin{proof} We notice that, $s.1 \in R$ for any $s \in S$. So $R = \{r+s : (r, s) \in D\}$. Therefore $N_{*}(D) = N_{*}(R) \oplus \{0\}$.\\
Let $D$ be an almost Armendariz ring. Since $D$ is trivial extension of $R$ by $S$. Therefore, $R$ is an almost Armendariz ring.\\
 Conversely, let $R$ be an almost Armendariz ring. Let $f(x) = \sum_{i=0}^{m}(a_{i}b_{i})x^{i} = (f_{1}(x), f_{2}(x))$ and $g(x) = \sum(c_{j}, d_{j})x^{j} = (g_{1}(x), g_{2}(x)) \in D[x]$ be such that $f(x)g(x) = 0$ where,\\ $f_{1}(x) = \sum_{i=0}^{m}a_{i}x^{i},$ $f_{2}(x) = \sum_{i=0}^{m}b_{i}x^{i},$ $g_{1}(x) = \sum_{j=0}^{n}c_{j}x^{j}$ and $g_{2}(x) = \sum_{j=0}^{n}d_{j}x^{j}$. From $f(x)g(x) = 0$, we have $$f_{1}(x)g_{1}(x)+f_{1}(x)g_{2}(x)+f_{2}(x)g_{1}(x) = 0$$ and $$f_{2}(x)g_{2}(x) = 0.$$ Since $S$ is a domain, therefore, either $f_{2}(x) = 0$ or $g_{2}(x) = 0$. \\

\textbf{Case 1.} If $f_{2}(x) = 0$, then from $$f_{1}(x)g_{1}(x)+f_{1}(x)g_{2}(x)+f_{2}(x)g_{1}(x) = 0,$$ we have $f_{1}(x)g_{1}(x)+f_{1}(x)g_{2}(x) = 0$ and this implies $f_{1}(x)(g_{1}(x)+g_{2}(x)) = 0$. Since $R$ is an almost Armendariz ring, we have $a_{i}(c_{j}+d_{j}) \in N_{*}(R)$ . Hence $(a_{i}, 0)(c_{j}, d_{j}) = (a_{i}c_{j}+a_{i}d_{j}, 0) \in N_{*}(R) \oplus \{0\}$ for each $i, j$. Thus, $D$ is an almost Armendariz ring.\\

\textbf{Case 2.} If $g_{2}(x) = 0$, then from $$f_{1}(x)g_{1}(x)+f_{1}(x)g_{2}(x)+f_{2}(x)g_{1}(x) = 0,$$ we have $(f_{1}(x)+f_{2}(x))g_{1}(x) = 0$. Since $R$ is an almost Armendariz ring, therefore, $(a_{i}+b_{i})c_{j} \in N_{*}(R)$ . Hence $(a_{i}, b_{i})(c_{j}, 0) = (a_{i}c_{j}+b_{i}c_{j}, 0) \in N_{*}(R)\oplus \{0\}$, for each $i, j$. Thus, $D$ is an almost Armendariz ring.
\end{proof}
\begin{pro} Let $R_{i}$ be rings for $i \in I$. Then $\prod R_{i}$ $(\oplus R_{i})$ is an almost Armendariz ring if and only if $R_{i}$ is an almost Armendariz ring for each $i \in I$.
\end{pro}
\begin{proof} We have, $N_{*}(\prod_{i \in I} R_{i}) = \prod_{i \in I} N_{*}(R_{i})$ and $N_{*}(\oplus_{i \in I} R_{i}) = \oplus_{i \in I} N_{*}(R_{i})$. Let $f(x), g(x) \in \prod _{i \in I} R_{i}[x]$ be such that $f(x)g(x) = 0$. We also have, $(\prod _{i \in I}R_{i})[x] = \prod _{i \in I}R_{i}[x]$, therefore $f(x) = \prod _{i \in I}(f_{i}(x)) \in \prod _{i \in I}R_{i}[x]$ and $g(x) = \prod _{i \in I}(g_{i}(x)) \in \prod _{i \in I}R_{i}[x]$. We know that each $R_{i}$ is an almost Armendariz ring, so $\prod _{i \in I} R_{i}$ is Armendariz ring. Similarly we can show that $\oplus _{i \in I}R_{i}$ is an almost Armendariz ring.
\end{proof}

Finally, we calculated the minimal order of a noncommutative almost Armendariz ring. In \cite{K}, Eldridge proved some results for the order of finite noncommutative ring with unity as follows:\\
\begin{itemize}
\item [(1)] A finite ring $R$ with identity is commutative if $|R|$ has cube free factorization.
\item [(2)]  If $R$ is noncommutative ring with identity and $|R| = p^{3}$, then $R$ is isomorphic to $\left(
                                                                    \begin{array}{cc}
                                                                      GF(p) & GF(p) \\
                                                                      0 & GF(p) \\
                                                                    \end{array}
                                                                  \right)$.
\end{itemize}

\begin{ex}

Let $R = \left\{\left(
             \begin{array}{cc}
               a & b \\
               0 & c \\
             \end{array}
           \right): a, b, c \in \mathbb{Z}_2\right\}$. Then by Proposition (3.2), $R$ is an almost Armendariz ring.
\end{ex}
\begin{pro}
Let $R$ be a noncommutative ring with identity. Then minimal cardinality of $R$ is $8$ to be an almost Armendariz ring.\\
 \end{pro}
 \begin{proof}
If $R$ has cardinality 8, then it is isomorphic to $\left(
                                                                                         \begin{array}{cc}
                                                                                           GF(2) & GF(2) \\
                                                                                           0 & GF(2) \\
                                                                                         \end{array}
                                                                                       \right)$.
 Hence from Example (8), $R$ is almost Armendariz.
 \end{proof}
\begin{ex} Let $D$ be a domain and $R = \left(
                                        \begin{array}{cc}
                                          D & D \\
                                          0 & 0 \\
                                        \end{array}
                                      \right)$, $S = \left(
                                                       \begin{array}{cc}
                                                         0 & D \\
                                                         0 & D \\
                                                       \end{array}
                                                     \right)$ be two rings.
                                      Let $0 \neq f(x), 0 \neq g(x) \in R[x]$ be such that $f(x)g(x) = 0$. Now, we can express $f(x)$, $g(x)$ as follows:\\
                                      $ f(x) = \left(
                                                 \begin{array}{cc}
                                                   f_{0} & f_{1} \\
                                                   0 & 0 \\
                                                 \end{array}
                                               \right)$, $g(x) = \left(
                                                                   \begin{array}{cc}
                                                                     g_{0} & g_{1} \\
                                                                     0 & 0 \\
                                                                   \end{array}
                                                                 \right)$ where $f_{0}, f_{1}, g_{0}, g_{1} \in D[x]$. From $f(x)g(x) = 0$, we get $f_{0}g_{0} = 0$ and $f_{0}g_{1} = 0$. If $ f_{0} = 0$, then $ab \in N_{*}(R)$ for each $a \in C_{f}$ and $b \in C_{g}$. If $f_{0} \neq 0$, then $g_{0} = 0$, $g_{1} = 0$ and $ab \in N_{*}(R)$ for each $a \in C_{f}$ and $b \in C_{g}$. Thus, $R$ is an almost Armendariz ring. Similarly, we can prove that $S$ is an almost Armendariz ring.
\end{ex}
\begin{thm} Let $R$ be a non-commutative ring without identity. Then the minimal cardinality of an almost Armendariz ring is 4.
\end{thm}
\begin{proof} Let $R$ be a minimal noncommutative almost Armendariz ring without identity. Since $R$ is commutative when $|R|\leq 3$, therefore, $|R| \geq 4$. If $|R| = 4$ and $R$ is nil, then $R$ is nilpotent as well as commutative by Lemma (2.7) of \cite{TT}, which contradicts the assumption. Therefore, $R$ must be non-nil and $|J(R)| = 0$ or $|J(R)| = 2$. If $|J(R)| = 0$, then $R$ is also commutative by Theorem 3.4 of \cite{YY}, again contradicts the assumption. Hence, $J(R)$ must have cardinality 2. Therefore, by Theorem (3.4) of \cite{YY}, $R$ is isomorphic to $\left(
                \begin{array}{cc}
                  \mathbb{Z}_2 & \mathbb{Z}_2 \\
                  0 & 0 \\
                \end{array}
              \right)$ or $\left(
                            \begin{array}{cc}
                              0 & \mathbb{Z}_2 \\
                              0 & \mathbb{Z}_2 \\
                            \end{array}
                          \right)$.

\end{proof}



\end{document}